\newtheorem{thm}{Theorem}
\newdefinition{rmk}{Remark}
\newproof{pf}{Proof}
\newdefinition{example}{Example}
\newdefinition{definition}{Definition}
\newdefinition{proposition}{Proposition}
\newdefinition{corollary}{Corollary}
\journal{Arkiv}
\begin{document}

\begin{frontmatter}

\title{Note on regions containing eigenvalues of a matrix}

\author[rvt1]{Suhua Li}
\author[rvt]{Qingbing Liu}
\author[rvt1]{Chaoqian Li\corref{cor1}}
\ead{lichaoqian@ynu.edu.cn}

\cortext[cor1]{Corresponding author.}
\address[rvt1]{School of Mathematics and Statistics, Yunnan
University, Kunming, P. R. China  650091}

\address[rvt]{Department of
Mathematics, Zhejiang Wanli University, Ningbo, P.R. China }

\begin{abstract}
By excluding some regions, in which each eigenvalue of a matrix is not contained, from the $\alpha\beta$-type eigenvalue inclusion region  provided by Huang et al.(Electronic Journal of Linear Algebra, 15 (2006) 215-224),
a new eigenvalue inclusion region is given. And it is proved that the new region is contained in the $\alpha\beta$-type eigenvalue inclusion region.
\end{abstract}
\begin{keyword} Eigenvalues; Inclusion region, Exclusion region
\MSC[2010] 15A18, 15A51, 65F15. 
\end{keyword}

\end{frontmatter}



\section{Introduction}Let $A=[a_{ij}]\in \mathbb{C}^{n\times n}$be an $n\times n$ complex matrix with $n\geq 2$ , and  $\alpha$ and $\beta$ nonempty index sets satisfying
\[\alpha  \bigcup \beta= N ~and ~ \alpha  \bigcap\beta  = \emptyset, \]
throughout the paper. Define partial absolute deleted row sums and column sums as follows:
\[r_i^\alpha (A):= \sum\limits_{j\in \alpha, \atop j\neq i} |a_{ij}|, ~c_i^\alpha (A):= \sum\limits_{j\in \alpha, \atop j\neq i} |a_{ji}|,\]
\[r_i^\beta (A):= \sum\limits_{j\in \beta, \atop j\neq i} |a_{ij}|, ~c_i^\beta (A):= \sum\limits_{j\in \beta, \atop j\neq i} |a_{ji}|.\]
If $\alpha =\{i_0\}$, then we assume, by convention, that $r_{i_0}^\alpha (A)=0$. Similarly
 $r_{i_0}^\beta (A)=0$ if $\beta=\{ i_0\}$. Clearly,
\[ r_i(A)= \sum\limits_{j\neq i} |a_{ij}|= r_i^\alpha (A)+ r_i^\beta (A),\]
 and
\[ c_i(A)= \sum\limits_{j\neq i} |a_{ji}|=c_i^\alpha (A)+ c_i^\beta (A).\]

In \cite{Hu}, Huang et al. provided a so-called $\alpha\beta$-type eigenvalue inclusion region by using the partial absolute deleted row sums $r_i^\alpha (A)$ and $r_i^\beta (A)$ as follows.

\begin{thm}\label{huang} \cite[Theorem 2.1]{Hu}
Let $A=[a_{ij}]\in \mathbb{C}^{n\times n}$ and $\lambda$ be an eigenvalue of $A$. Then
 \[\lambda \in  G^{(\alpha \beta)} (A) := \left(\bigcup\limits_{i\in \alpha} G_i^{(\alpha)}(A)\right) \bigcup \left(\bigcup\limits_{j\in \beta} G_j^{(\beta)}(A)\right)
  \bigcup \left( \bigcup \limits_{i\in \alpha, \atop j \in \beta} G^{(\alpha\beta)}_{ij} (A)\right),  \]
where \[G_i^{(\alpha)}(A):=\bigcup \limits_{i\in \alpha}\left\{z\in \mathbb{C}:|a_{ii}-z| \leq   r_i^\alpha(A) \right \},\]
\[ G_j^{(\beta)}(A) :=\bigcup \limits_{j\in \overline{S}}\left\{z\in \mathbb{C}:|a_{jj}-z| \leq   r_j^{\beta}(A) \right \},\]
and
\begin{eqnarray*} G^{(\alpha\beta)}_{ij} (A)&:=& \left\{z\in \mathbb{C}: z\notin  G_i^{(\alpha)}(A) \bigcup G_j^{(\beta)}A), \right.\\
&&\left.\left(|a_{ii}-z|-  r_i^\alpha(A)\right) \left(|a_{jj}-z|-  r_j^{\beta}(A) \right)\leq   r_i^{\beta}(A)r_j^\alpha(A) \right \}.\end{eqnarray*}
\end{thm}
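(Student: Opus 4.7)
The plan is to mimic the classical Ger\v{s}gorin-style argument, but with a two-index maximization that respects the partition $N = \alpha \cup \beta$. Let $\lambda$ be an eigenvalue of $A$ with eigenvector $x \neq 0$, and choose
\[
p \in \alpha \text{ with } |x_p| = \max_{i \in \alpha} |x_i|, \qquad q \in \beta \text{ with } |x_q| = \max_{j \in \beta} |x_j|.
\]
First I would write out the $p$th and $q$th coordinates of $Ax = \lambda x$, split each summation according to the partition $\alpha \cup \beta$, move the diagonal term to the left, take moduli, and use the maximality of $|x_p|$ on $\alpha$ and of $|x_q|$ on $\beta$. This should yield the two basic inequalities
\[
\bigl(|a_{pp}-\lambda| - r_p^{\alpha}(A)\bigr)|x_p| \;\le\; r_p^{\beta}(A)\,|x_q|,
\]
\[
\bigl(|a_{qq}-\lambda| - r_q^{\beta}(A)\bigr)|x_q| \;\le\; r_q^{\alpha}(A)\,|x_p|.
\]

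Next I would argue by cases on the sign of $|x_p|$ and $|x_q|$ and on whether either factor on the left-hand side is non-positive. If $|x_q| = 0$, then $x$ is supported in $\alpha$ and the first inequality immediately gives $\lambda \in G_p^{(\alpha)}(A)$; symmetrically $|x_p| = 0$ yields $\lambda \in G_q^{(\beta)}(A)$. If $|a_{pp}-\lambda| \le r_p^{\alpha}(A)$ then again $\lambda \in G_p^{(\alpha)}(A)$, and similarly for the other factor. The interesting case is when $|x_p|, |x_q| > 0$ and both left-hand factors are strictly positive; there I would multiply the two inequalities and cancel the positive product $|x_p||x_q|$ to get
\[
\bigl(|a_{pp}-\lambda| - r_p^{\alpha}(A)\bigr)\bigl(|a_{qq}-\lambda| - r_q^{\beta}(A)\bigr) \;\le\; r_p^{\beta}(A)\, r_q^{\alpha}(A),
\]
which together with the two strict inequalities places $\lambda$ in $G_{pq}^{(\alpha\beta)}(A)$.

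The only genuine obstacle I anticipate is the bookkeeping around degenerate cases: when $\alpha$ or $\beta$ is a singleton (so the convention $r_{i_0}^{\alpha}(A) = 0$ or $r_{i_0}^{\beta}(A) = 0$ kicks in), when one of $x_p$, $x_q$ vanishes, and when one of the two left-hand factors is zero or negative (so the multiplication step requires justification of sign). None of these is hard in isolation, but they must be handled cleanly so that every $\lambda$ lands in one of the three explicit pieces of $G^{(\alpha\beta)}(A)$, rather than being lost between them. Once that case split is in place, no further ingredients beyond the triangle inequality and the eigenvector maximum principle are needed.
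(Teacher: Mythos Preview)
Your proposal is correct and follows the standard Ger\v{s}gorin-style argument for this result. Note that the present paper does not actually supply its own proof of this theorem---it is quoted from \cite[Theorem~2.1]{Hu}---but the approach you outline is precisely the one alluded to in the proof of Theorem~\ref{Mian_thm}, where the authors choose $p\in\alpha$ and $q\in\beta$ maximizing $|x_i|$ over each block and then write ``Similarly to the proof of Theorem~2.1 in \cite{Hu}, we can get'' the product inequality~(\ref{eq4}); your two displayed inequalities and their multiplication are exactly that step, and your case analysis on vanishing coordinates and non-positive factors is the clean way to handle the degenerate situations.
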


The region  $G^{(\alpha \beta)} (A)$ in Theorem \ref{huang} is equivalent to
\[\left(\bigcap\limits_{i\in \alpha} G_i^{(\alpha)}(A)\right) \bigcup \left( \bigcup \limits_{i\in \alpha, \atop j \in \beta} V^{(\alpha\beta)}_{ij} (A)\right) \] provided by Farid  in \cite{Fa},
where \[V^{(\alpha\beta)}_{ij} (A):=\left\{z\in \mathbb{C}: \left(|a_{ii}-z|-  r_i^\alpha(A)\right) \left(|a_{jj}-z|-  r_j^{\beta}(A) \right)\leq   r_i^{\beta}(A)r_j^\alpha(A) \right \}, \] for details, see Remark 2.4, \cite{Fa}. It seems that each $G_j^{(\beta)}(A)$, $ j\in \beta$ dose not work, which is illustrated by the matrix
\[A= \left[
\begin{array}{cccc}
  1   &-0.01    &0     &7 \\
  0.1     &0.9   &-1.1   &0.3  \\
  -0.11  &0.2   &0.9    &0.2\\
  2  & 0.4   &-0.1   &1.1
\end{array} \right].\] All eigenvalues of $A$ are
\[  4.8161,  -2.6994,   0.8917 + 0.4921\textbf{i}, 0.8917 - 0.4921\textbf{i},\]
where $\textbf{i}^2=-1$. Take $\alpha=\{1,3\}$ and $\beta=\{2,4\}$, then
\[G_2^{(\beta)}(A)=\{ z\in \mathbb{C}: |z-0.9|\leq 0.3 \}, \]
and
\[G_4^{(\beta)}(A)=\{ z\in \mathbb{C}: |z-1.1|\leq 0.4 \}. \]
It is easy to see that all eigenvalues are not contained in $G_2^{(\beta)}(A) \bigcup G_4^{(\beta)}(A) $, see Figure 1.
Hence  $G_2^{(\beta)}(A) \bigcup G_4^{(\beta)}(A) $ can be excluded from $G^{(\alpha \beta)} (A)$ for the matrix $A$.
On the other hand, by taking $\beta=\{2,3\}$, Figure 2 shows that
\[\{ 0.8917 + 0.4921\textbf{i}, 0.8917 - 0.4921\textbf{i} \} \subseteq \left(G_2^{(\beta)}(A) \bigcup G_3^{(\beta)}(A)\right)\]
where $G_2^{(\beta)}(A)=\{ z\in \mathbb{C}: |z-0.9|\leq 1.1 \}$ and $G_3^{(\beta)}(A)=\{ z\in \mathbb{C}: |z-0.9|\leq 0.2 \}.$
This means that each  $G_{j}^{(\beta)}(A)$ for $j=2,3$ cannot be excluded from  $G^{(\alpha \beta)} (A)$ for the this case.
This motivates us to find another regions in which  any eigenvalue of a matrix $A$ is not contained to exclude them.
In this paper, a new inclusion region by excluding some such regions from
 $G^{(\alpha \beta)} (A)$ is given. This provides a sufficient condition for the non-singularity of a matrix.  Numerical examples are also given to verify the corresponding results.


\section{Main results}
In this section, some regions in which  any eigenvalue of a matrix  is not contained, are excluded from  $G^{(\alpha \beta)} (A)$.

\begin{thm}\label{Mian_thm}
Let $A=[a_{ij}]\in \mathbb{C}^{n\times n}$ and $\lambda$ be an eigenvalue of $A$. Then
\begin{eqnarray}\label{eq1}
\lambda \in  E^{(\alpha \beta)} (A)= \left(\bigcup\limits_{i\in \alpha} G_i^{(\alpha)}(A)\right)& \bigcup \left(\bigcup\limits_{j\in \beta} G_j^{(\beta)}(A)\right)\bigcup \left( \bigcup \limits_{i\in \alpha, \atop j \in \beta} \left(G^{(\alpha\beta)}_{ij} (A)\backslash  \widetilde{E}^{(\alpha\beta)}_{ij} (A) \right) \right)\nonumber\\
&\bigcup \left( \bigcup \limits_{i\in \alpha, \atop j \in \beta} \left(G^{(\alpha\beta)}_{ij} (A)\backslash  \widehat{E}^{(\alpha\beta)}_{ij} (A) \right) \right)
\end{eqnarray}
where $G_i^{(\alpha)}(A)$, $G_j^{(\beta)}(A)$ and  $G^{(\alpha\beta)}_{ij} (A)$ are defined in Theorem \ref{huang},
\[\widetilde{E}^{(\alpha\beta)}_{ij} (A) = \left\{z\in \mathbb{C}: (|z-a_{ii}|+r_i^j(A)) \left(|z-a_{jj}|+r_j^\beta(A)\right) < |a_{ij}| \left(2|a_{ji}|-r_j^\alpha(A)\right) \right \},\]
and
\[\widehat{E}^{(\alpha\beta)}_{ij} (A) = \left\{z\in \mathbb{C}:(|z-a_{jj}|+r_j^i(A)) \left(|z-a_{ii}|+r_i^\alpha(A)\right) < |a_{ji}| \left(2|a_{ij}|-r_i^\beta(A)\right)  \right \}.\]
Furthermore, $ E^{(\alpha \beta)} (A)  \subseteq  G^{(\alpha \beta)} (A) $.
\end{thm}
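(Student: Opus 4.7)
\bigskip
\noindent\textbf{Proof plan.}
The inclusion $E^{(\alpha\beta)}(A)\subseteq G^{(\alpha\beta)}(A)$ is immediate from the definition of $E^{(\alpha\beta)}(A)$, since each of its last two unions is plainly contained in $\bigcup_{i\in\alpha,\,j\in\beta}G^{(\alpha\beta)}_{ij}(A)$, and the first two unions coincide with those of $G^{(\alpha\beta)}(A)$. The substance of the theorem is therefore the claim $\lambda\in E^{(\alpha\beta)}(A)$. My plan is to pick an eigenvector $x\neq 0$ with $Ax=\lambda x$, choose a \emph{global} index $p$ with $|x_p|=\max_{k}|x_k|$, and then distinguish the two cases $p\in\alpha$ and $p\in\beta$, in each case letting $q$ maximize $|x_k|$ over the complementary block. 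Theorem~\ref{huang} already places $\lambda$ in $G_p^{(\alpha)}(A)\cup G_q^{(\beta)}(A)\cup G^{(\alpha\beta)}_{pq}(A)$ (re-indexing so that the first slot of the Brauer-type set belongs to $\alpha$), and the first two sets sit inside $E^{(\alpha\beta)}(A)$, so only the last set demands attention.

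Focus on $p\in\alpha$, $q\in\beta$. From the $p$-th row of $Ax=\lambda x$, isolate the term $a_{pq}x_q$, apply the triangle inequality, and bound $|x_k|\le|x_p|$ for every $k\neq p,q$ (valid because $p$ is the \emph{global} maximizer), yielding
\[
|a_{pq}|\,|x_q|\;\le\;\bigl(|\lambda-a_{pp}|+r_p^q(A)\bigr)\,|x_p|,
\]
where $r_p^q(A)=r_p(A)-|a_{pq}|$. From the $q$-th row, isolate $a_{qp}x_p$ and split the residual sum into its $\alpha\setminus\{p\}$ part (bound each $|x_k|$ by $|x_p|$) and its $\beta\setminus\{q\}$ part (bound by $|x_q|$); collecting the $|x_p|$ terms I get
\[
|x_p|\bigl(2|a_{qp}|-r_q^\alpha(A)\bigr)\;\le\;\bigl(|\lambda-a_{qq}|+r_q^\beta(A)\bigr)\,|x_q|.
\]
Multiplying the two estimates and cancelling $|x_p|\,|x_q|$ produces precisely the negation of the inequality defining $\widetilde{E}^{(\alpha\beta)}_{pq}(A)$, so $\lambda\in G^{(\alpha\beta)}_{pq}(A)\setminus\widetilde{E}^{(\alpha\beta)}_{pq}(A)\subseteq E^{(\alpha\beta)}(A)$. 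The case $p\in\beta$ is symmetric: repeating the argument with the roles of $\alpha,\beta$ and of $p,q$ swapped delivers $\lambda\in G^{(\alpha\beta)}_{qp}(A)\setminus\widehat{E}^{(\alpha\beta)}_{qp}(A)$.

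The main technical subtlety is engineering the row-$p$ step so that $r_p^q(A)=r_p(A)-|a_{pq}|$ appears instead of $r_p^\alpha(A)$; this hinges critically on $p$ being the maximizer of $|x_k|$ over \emph{all} indices, not just within its block, so that the single bound $|x_k|\le|x_p|$ applies to every $k\neq p,q$. A handful of degenerate situations must still be recorded but are painless: if $|x_q|=0$ then the row-$p$ equation reduces to $|\lambda-a_{pp}|\le r_p^\alpha(A)$, placing $\lambda\in G_p^{(\alpha)}(A)$; if $2|a_{qp}|-r_q^\alpha(A)\le 0$ then $\widetilde{E}^{(\alpha\beta)}_{pq}(A)$ is empty and the non-membership is automatic; the analogous edge cases for $p\in\beta$ dispose of the symmetric argument similarly.
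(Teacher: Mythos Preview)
Your proof is correct and follows essentially the same route as the paper's. The only cosmetic difference is the bookkeeping: the paper fixes $p=\arg\max_{i\in\alpha}|x_i|$ and $q=\arg\max_{j\in\beta}|x_j|$ and then splits into the cases $|x_p|\ge|x_q|$ versus $|x_q|\ge|x_p|$, whereas you take $p$ to be the global maximizer and then split on $p\in\alpha$ versus $p\in\beta$; these two organizations are equivalent, and the row-$p$ and row-$q$ estimates, the multiplication step, and the handling of the degenerate cases ($|x_q|=0$ or $2|a_{qp}|-r_q^\alpha(A)\le 0$) match the paper's argument line by line.
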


\begin{proof}
Since
\[\left(G^{(\alpha\beta)}_{ij} (A)\backslash  \widetilde{E}^{(\alpha\beta)}_{ij} (A) \right) \subseteq   G^{(\alpha\beta)}_{ij} (A)\]
and
\[ \left(G^{(\alpha\beta)}_{ij} (A)\backslash  \widehat{E}^{(\alpha\beta)}_{ij} (A) \right)  \subseteq   G^{(\alpha\beta)}_{ij} (A), \]
hold for each $i\in \alpha$ and $j\in \beta$,  it implies that
\[  E^{(\alpha \beta)} (A)  \subseteq  G^{(\alpha \beta)} (A).\] Hence, we only need to prove
\[ \lambda \in  E^{(\alpha \beta)} (A).\]

Suppose that $x=[x_1,x_2,\ldots, x_n]^T $ is an eigenvector of $A $ corresponding to the eigenvalue $\lambda$, that is,
\begin{equation}\label{eq2} Ax=\lambda x.\end{equation}
Let $|x_p|=\max\limits_{i\in \alpha} \{|x_i|\}$ and $|x_q|=\max\limits_{j\in \beta} \{|x_j|\}$. Obviously, at least one of $|x_p|$ and  $|x_q|$ is nonzero. Note that if
\[|\lambda-a_{pp}| \leq   r_p^\alpha(A), ~or~ |\lambda-a_{qq}| \leq   r_q^{\beta}(A),\]
then $\lambda \in \left(\bigcup\limits_{i\in \alpha} G_i^{(\alpha)}(A)\right) \bigcup \left(\bigcup\limits_{j\in \beta} G_j^{(\beta)}(A)\right) \subseteq   E^{(\alpha \beta)} (A)$.
Hence, next we only prove that (\ref{eq1}) holds for the case that
\begin{equation}\label{eq3} |a_{pp}-\lambda| >  r_p^\alpha(A), ~and~ |a_{qq}-\lambda| >  r_q^{\beta}(A).\end{equation}

(I) Suppose that $ |x_p|\geq |x_q|$. Similarly to the proof of Theorem 2.1 in \cite{Hu}, we can get that
\begin{equation}\label{eq4} \left(|\lambda-a_{pp}|-r_p^\alpha(A)\right) \left(|\lambda-a_{qq}|-r_q^\beta(A)\right)  \leq r_p^\beta(A) r_q^\alpha(A).\end{equation}
Furthermore, consider the $p$-th equation of (\ref{eq2}) and rewrite it into
\begin{equation}\label{eq5}
(\lambda-a_{pp})x_p-\sum\limits_{j\in N, \atop j\neq p, q} a_{pj}x_j = a_{pq}x_q,
\end{equation}
Taking absolute values on both sides of (\ref{eq5}) and using the triangle inequality give
\begin{eqnarray}\label{eq6}
|a_{pq}||x_q| &\leq& |\lambda-a_{pp}||x_p|+ \sum\limits_{j\in N, \atop j\neq p, q} |a_{pj}||x_j|\nonumber\\
 &\leq&|\lambda-a_{pp}||x_p|+ \sum\limits_{j\in N, \atop j\neq p, q} |a_{pj}||x_p|\nonumber\\
 &=& (|\lambda-a_{pp}|+r_p^q(A))|x_p|.\end{eqnarray}
On the other hand, consider the $q$-th equation of (\ref{eq2}) and rewrite it into
\begin{eqnarray}\label{eq7}
(\lambda-a_{qq})x_q-\sum\limits_{j\in \alpha, \atop j\neq p} a_{qj}x_j -\sum\limits_{j\in \beta, \atop j\neq q} a_{qj}x_j = a_{qp}x_p,
\end{eqnarray}
Taking absolute values on both sides of (\ref{eq7}) and using the triangle inequality give
\begin{eqnarray*}
|a_{qp}||x_p| &\leq& |\lambda-a_{qq}||x_q|+ \sum\limits_{j\in \alpha, \atop j\neq p} |a_{qj}||x_j| +\sum\limits_{j\in \beta,\atop j\neq q} |a_{qj}||x_j|\nonumber\\
 &\leq&|\lambda-a_{qq}||x_q|+ \sum\limits_{j\in \alpha, \atop j\neq p} |a_{qj}||x_p| +\sum\limits_{j\in \beta,\atop j\neq q} |a_{qj}||x_q|\nonumber\\
 &=& \left(|\lambda-a_{qq}|+r_q^\beta(A)\right)|x_q|+ \sum\limits_{j\in \alpha, \atop j\neq p} |a_{qj}||x_p|.\end{eqnarray*}
Hence
\begin{equation}\label{eq8} \left(2|a_{qp}|-r_q^\alpha(A)\right)|x_p| \leq  \left(|\lambda-a_{qq}|+r_q^\beta(A)\right)|x_q|.\end{equation}
If $ |x_q|> 0$, then multiplying (\ref{eq6}) with (\ref{eq8}) gives
\begin{equation}\label{eq9}
|a_{pq}| \left(2|a_{qp}|-r_q^\alpha(A)\right) \leq (|\lambda-a_{pp}|+r_p^q(A)) \left(|\lambda-a_{qq}|+r_q^\beta(A)\right).
\end{equation}
If  $ |x_q|=0$, then by (\ref{eq8}) we have
\[ 2|a_{qp}|-r_q^\alpha(A) \leq 0,\]
and hence (\ref{eq9}) also holds. From (\ref{eq4}) and (\ref{eq9}), it follows that
\[\lambda \in G^{(\alpha\beta)}_{ij} (A) \bigcap \overline{\widetilde{E}^{(\alpha\beta)}_{ij} (A)}= G^{(\alpha\beta)}_{ij} (A) \backslash \widetilde{E}^{(\alpha\beta)}_{ij} (A) \subseteq   E^{(\alpha \beta)} (A),\]
where $\overline{\widetilde{E}^{(\alpha\beta)}_{ij} (A)}$ is the complementary of the set $ \widetilde{E}^{(\alpha\beta)}_{ij} (A)$.

(II) Suppose that $ |x_q|\geq |x_p|$. Similarly to (I), we  can easily obtain that (\ref{eq4}) holds, and
\begin{equation}\label{eqn2.6}
|a_{qp}| \left(2|a_{pq}|-r_p^\beta(A)\right) \leq (|\lambda-a_{qq}|+r_q^p(A)) \left(|\lambda-a_{pp}|+r_p^\alpha(A)\right).
\end{equation}
Hence
\[\lambda \in G^{(\alpha\beta)}_{ij} (A) \bigcap \overline{\widehat{E}^{(\alpha\beta)}_{ij} (A)}= G^{(\alpha\beta)}_{ij} (A) \backslash \widehat{E}^{(\alpha\beta)}_{ij} (A) \subseteq  E^{(\alpha \beta)} (A).\]
From (I) and (II) the conclusion follows.\end{proof}

\begin{rmk} \label{rmk1} Theorem \ref{Mian_thm} shows that for any eigenvalue $\lambda$ of a matrix $A, $
\[ \lambda \notin  \widetilde{E}^{(\alpha\beta)}_{ij} (A), ~or~\lambda \notin  \widehat{E}^{(\alpha\beta)}_{ij} (A) ~for ~j\neq i. \]
And hence $\widetilde{E}^{(\alpha\beta)}_{ij} (A)$ (or $\widehat{E}^{(\alpha\beta)}_{ij} (A)$) can be excluded from the set $G^{(\alpha \beta)} (A)$.
Consider again the matrix $A$ in the introduction section, and take $\alpha=\{1,2\}$ and $\beta=\{3,4\}$. From Figure 3, it is not difficult to see that  all eigenvalues
\[\{ 4.8161,  -2.6994,  0.8917 + 0.4921\textbf{i}, 0.8917 - 0.4921\textbf{i}\} \subseteq E^{(\alpha \beta)} (A),\]
but are not contained in  $\widetilde{E}^{(\alpha\beta)}_{ij} (A)$ (or $\widehat{E}^{(\alpha\beta)}_{ij} (A)$).
On the other hand, it should be pointed out here that
\begin{equation}\label{neq3.1} E^{(\alpha \beta)} (A) = G^{(\alpha \beta)} (A)\end{equation} holds in some cases.
Also consider the matrix $A$ above, and  take $\alpha=\{1,4\}$ and $\beta=\{2,3\}$, in which case (\ref{neq3.1}) holds.

\end{rmk}

It is well-known that an eigenvalue inclusion region in the complex can give a sufficient condition of the non-singularity of a matrix \cite{Cv1}. Hence, by Theorem \ref{Mian_thm} we can get easily  the following result.

\begin{corollary}
Let $A=[a_{ij}]\in \mathbb{C}^{n\times n}$. If for each $i\in\alpha$, $j\in\beta$, the following holds:

(I) \[|a_{ii}|> r_i^\alpha(A),\]

(II)\[|a_{jj}|> r_j^{\beta}(A),\]

(III) \[(|a_{ii}|-  r_i^\alpha(A))(|a_{jj}|-r_j^{\beta}(A))>r_i^{\beta}(A)r_j^\alpha(A),\]
or \[(|a_{ii}|+r_i^j(A))(|a_{jj}|+r_j^\beta(A))<|a_{ij}|(2|a_{ji}|-r_j^\alpha(A)),\]

(IV) \[(|a_{ii}|-  r_i^\alpha(A))(|a_{jj}|-r_j^{\beta}(A))>r_i^{\beta}(A)r_j^\alpha(A),\]
 or
\[(|a_{jj}|+r_j^i(A))(|a_{ii}|+r_i^\alpha(A))<|a_{ji}|(2|a_{ij}|-r_i^\beta(A)),\]
\end{corollary}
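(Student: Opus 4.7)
The plan is to view the corollary as the standard non-singularity consequence of the eigenvalue inclusion Theorem \ref{Mian_thm}: conditions (I)--(IV) together force $0 \notin E^{(\alpha\beta)}(A)$, so by Theorem \ref{Mian_thm} the value $0$ cannot be an eigenvalue of $A$, which means $A$ is non-singular. I would therefore state the intended conclusion explicitly at the top of the proof and then show, one component at a time, that $z=0$ fails to lie in each of the four types of sets whose union forms $E^{(\alpha\beta)}(A)$.

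First I would substitute $z=0$ into the defining inequality of $G_i^{(\alpha)}(A)$: the inequality $|a_{ii}-0|\leq r_i^\alpha(A)$ is the direct negation of (I), so (I) yields $0\notin G_i^{(\alpha)}(A)$ for every $i\in\alpha$. The same substitution in $G_j^{(\beta)}(A)$ and condition (II) give $0\notin G_j^{(\beta)}(A)$ for every $j\in\beta$. This disposes of the first two unions that build up $E^{(\alpha\beta)}(A)$.

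Next I would handle the two set-difference pieces. For any set $S$, the point $0$ fails to lie in $G_{ij}^{(\alpha\beta)}(A)\setminus S$ exactly when either $0\notin G_{ij}^{(\alpha\beta)}(A)$ or $0\in S$. Setting $z=0$ in the defining inequality of $G_{ij}^{(\alpha\beta)}(A)$ yields $(|a_{ii}|-r_i^\alpha(A))(|a_{jj}|-r_j^\beta(A))\leq r_i^\beta(A)r_j^\alpha(A)$, which is the negation of the first alternative in (III) (and in (IV)); similarly, $z=0$ in the defining inequality of $\widetilde{E}_{ij}^{(\alpha\beta)}(A)$ is exactly the second alternative in (III), and $z=0$ in $\widehat{E}_{ij}^{(\alpha\beta)}(A)$ is the second alternative in (IV). Hence (III) guarantees $0\notin G_{ij}^{(\alpha\beta)}(A)\setminus\widetilde{E}_{ij}^{(\alpha\beta)}(A)$ and (IV) guarantees $0\notin G_{ij}^{(\alpha\beta)}(A)\setminus\widehat{E}_{ij}^{(\alpha\beta)}(A)$, for every admissible pair $(i,j)$.

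Combining the four bullets, $0$ lies in none of the four unions comprising $E^{(\alpha\beta)}(A)$, so $0\notin E^{(\alpha\beta)}(A)$ and the corollary follows from Theorem \ref{Mian_thm}. The only conceptual point that deserves care is the logical form of membership in a set difference, so I would spell out the ``either/or'' pairing between the clauses of (III), (IV) and the two alternative locations ($G_{ij}^{(\alpha\beta)}$ or $\widetilde{E}_{ij}^{(\alpha\beta)}/\widehat{E}_{ij}^{(\alpha\beta)}$) to keep the bookkeeping transparent; the rest is simply reading each condition as the $z=0$ evaluation of the corresponding inequality. There is no genuine obstacle beyond this matching.
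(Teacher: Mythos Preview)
Your proposal is correct and is precisely the argument the paper has in mind: the paper gives no detailed proof of the corollary, only the sentence ``Hence, by Theorem \ref{Mian_thm} we can get easily the following result,'' and your write-up is exactly the routine verification that (I)--(IV) are the $z=0$ instantiations forcing $0\notin E^{(\alpha\beta)}(A)$. Nothing further is needed.
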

then $A$ is nonsingular.
\section{Conclusions}
In this paper, a new eigenvalue inclusion region $E^{(\alpha \beta)} (A)$  is given by excluding some regions  $\widetilde{E}^{(\alpha\beta)}_{ij} (A)$ and  $\widehat{E}^{(\alpha\beta)}_{ij} (A)$ from the $\alpha\beta$-type eigenvalue inclusion region $G^{(\alpha \beta)} (A).$ As shown in Remark \ref{rmk1},  $E^{(\alpha \beta)} (A)=G^{(\alpha \beta)} (A) $ holds in some cases, hence besides $\widetilde{E}^{(\alpha\beta)}_{ij} (A)$ and  $\widehat{E}^{(\alpha\beta)}_{ij} (A)$, it is very interesting to find other regions which do not contain any eigenvalues of a matrix to exclude them. Furthermore, for the well-known eigenvalue inclusion regions, such as, regions in \cite{Bra,Bru,Cv0,Cv1,Cv2,Ge,Ko,Li,Me1}, we can try to find some regions like $\widetilde{E}^{(\alpha\beta)}_{ij} (A)$ and  $\widehat{E}^{(\alpha\beta)}_{ij} (A)$, and exclude them from the corresponding existing eigenvalue inclusion regions to give new regions which capture all eigenvalues of a matrix more precisely.

\section*{Acknowledgements}
This paper  is dedicated to Professor *** on the occasion of his 60th birthday. This work is supported in part by National Natural Science Foundations of
China (11601473 and 11361074), the National Natural Science Foundation of Zhejiang Province (LY14A010007, LQ14G010002), Ningbo Natural
Science Foundation (2015A610173), and CAS "Light of West China" Program.

\end{document}